\documentclass[11pt]{article}

\usepackage{fullpage}
\usepackage{amsmath, amsthm, amsfonts, amssymb, amstext, mathrsfs, enumerate}
\usepackage{graphicx, ragged2e, lscape, framed, xcolor}
\usepackage{subfiles}

\theoremstyle{plain}
\newtheorem{theorem}{Theorem}[section]

\newtheorem{problem}[theorem]{Problem}
\newtheorem{claim}{Claim}[section]
\newtheorem{remark}[theorem]{Remark}

\numberwithin{equation}{section}
\allowdisplaybreaks

\newcommand{\affl}[3]{\noindent #1, Email: {\tt #2}\\ \textsc{#3}\\[1.5pt]}

\usepackage[pagebackref]{hyperref}
\hypersetup{
	colorlinks=true,
    urlcolor=purple,
	linkcolor=purple,
    citecolor=purple,
}

\DeclareMathOperator{\tr}{tr}

\DeclareMathOperator{\cone}{Cone}
\DeclareMathOperator{\sgn}{sgn}
\DeclareMathOperator{\diag}{diag}

\DeclareMathOperator{\lin}{\ell_{\infty}}
\def\1{\mbox{\boldmath $1$}}
\def\cvec{\mbox{\boldmath $c$}}
\def\w{\mbox{\boldmath $w$}}

\def\s{\mbox{\boldmath $s$}}
\def\x{\mbox{\boldmath $x$}}

\def\z{\mbox{\boldmath $z$}}
\newcommand{\ip}[2]{\langle #1, #2 \rangle}

\title{\textbf{Stability of maximal relative projection constants}}
\author{Hitesh Kumar, Bojan Mohar, Seyed Ahmad Mojallal, Shivaramakrishna Pragada}
\date{}

\begin{document}
\maketitle
\begin{abstract} For positive integers $n\ge r$, let $\lambda(r,n)$ denote the \emph{maximal relative projection constant} of $r$-dimensional subspaces of $\ell_\infty^n$ and $\lambda(r)$ denote the \emph{maximal absolute projection constant}, respectively. It is known that for any fixed $r$, $\lambda(r,n)$ is a non-decreasing sequence with limit $\lambda(r)$ as $n\to \infty$. A natural question is whether $\lambda(r,n)$ stabilizes at $\lambda(r)$ for some $n>r$. We prove that for any fixed $r$, 
\[\lambda(r,n)=\lambda(r)
\qquad\text{for every}\qquad
n\ge 2^{r}\binom{r+1}{2}.\] 
This answers a question of Basso. The technique used is of independent interest. 
\end{abstract}

\noindent
\textbf{Keywords:} Maximal relative projection constant, Maximal absolute projection constant, Projection constants, Seidel adjacency matrix, Convexity    

\noindent
\textbf{MSC2020:} 05C50, 15A42, 15A60, 46B20, 52A38

\section{Introduction}

\subsection{Background}

For $n\in \mathbb{N}$, define $[n]:=\{1, \ldots, n\}$. For an index set $\mathcal{I}\subseteq \mathbb{N}$, let
\[\lin(\mathcal{I}):=\{(x_i)_{i\in \mathcal{I}}: x_i\in \mathbb{R},\  \sup_{i\in \mathcal{I}} |x_i| < \infty \}.\]
We will denote $\lin([n])$ simply by $\ell_\infty^n$. 

Let $X$ be a normed space over $\mathbb{R}$ and $Y$ be a linear subspace of $X$. A linear onto map $P:X\to Y$ is called a \emph{projection} if $P^2 = P$. The \emph{relative projection constant} of $Y$ is defined to be 
\[ \lambda(Y, X):=\inf \{\|P\| : P \text{ is a projection from }X\to Y\},\]
where $\|P\|$ denotes the \emph{operator norm} of $P$. The \emph{absolute projection constant} of $Y$ is defined to be 
\[ \lambda(Y):=\sup \{\lambda(T(Y), X): X \text{ is a normed space and } T:Y\to X \text{ is a linear isometry}\}.\]
The question of determining these projection constants is quite old, very difficult, and related to many other concepts in functional analysis. We refer to the recent papers of Basso \cite{Basso_2019} and Kobos \cite{Kobos_2025} and the references therein for detailed background. Our focus will be the finite-dimensional case over the field $\mathbb{R}$. It is well-known that any finite-dimensional normed space $X$ can be embedded isometrically in $\lin(\mathcal{I})$ for some $\mathcal{I}\subset \mathbb{N}$. And it is known that $\lambda(Y) = \lambda(T(Y), \ell_\infty(\mathbb{N}))$ for any isometric embedding $T:Y\to \ell_\infty(\mathbb{N})$. There are two parameters of interest.  

For $r, n\in \mathbb{N}$ with $n\ge r$, define 
\begin{equation}\label{eq:defn}
   \lambda(r, n):= \sup\{\lambda(Y, \ell_\infty^n): \dim(Y) = r\} \quad \text{and}\quad  \lambda(r):= \sup\{\lambda(Y): \dim(Y) = r\}. 
\end{equation}
The quantities $\lambda(r,n)$ and $\lambda(r)$ are called \emph{maximal relative projection constant} and \emph{maximal absolute projection constant}, respectively. Clearly, $\lambda(r, n)\le \lambda(r)$ for all $n \ge r$. It is known that $\lambda(r)$ is a well-defined real number (cf. \cite{Basso_2019}). Moreover,
\begin{equation}
   \lambda(r,n)\le \lambda(r,n+1) 
\end{equation}
for all $n\ge r$ (cf. \cite{Chalmers_Lewicki_2010}), and
\begin{equation}\label{eq:sup_r_d_equal_absolute}
  \sup_{n\ge r} \lambda(r,n) = \lambda(r)  
\end{equation}
for any fixed $r$. 

Determining $\lambda(r)$ or $\lambda(r,n)$ is extremely difficult. We give a brief summary of the known values. An immediate application of the Hahn--Banach Theorem gives $\lambda(1) = 1$. In 1960, Gr\"{u}nbaum conjectured $\lambda(2)=\frac{4}{3}$, which was eventually resolved in 2010 by Chalmers and Lewicki \cite{Chalmers_Lewicki_2010}, with an alternative proof given by Basso \cite{Basso_2019} in 2019. In 2023, in a breakthrough paper, Der\c{e}gowska and Lewandowska \cite{Deregowska_Lewandowska_2023} determined the new values $\lambda(3) = \frac{1+\sqrt{5}}{2}$, $\lambda(7) = \frac{5}{2}$ and $\lambda(23) = \frac{14}{3}$. Their results show a deep connection between the projection constant $\lambda(r)$ and the existence of maximal equiangular tight frames in $\mathbb{R}^r$. As for general bounds, Kadets and Snobar \cite{Kadets_Snobar_1971} established 
\begin{equation}\label{eq:sqrt_r}
  \lambda(r)\le \sqrt{r}, 
\end{equation}
and K\"{o}nig \cite{Konig_1985} proved that the bound in \eqref{eq:sqrt_r} is asymptotically sharp. The following general inequality was first claimed by K\"{o}nig and Tomczak-Jaegermann \cite{Konig_Jaegermann_1994}, but later their proof was found to be erroneous, as shown in \cite{Chalmers_Lewicki_2009}. The correct and indeed a very short proof was given by Der\c{e}gowska and Lewandowska \cite{Deregowska_Lewandowska_2023}.  

\begin{theorem}[\cite{Deregowska_Lewandowska_2023}]\label{thm:D_L_bound} For any $r\in \mathbb{N}$, 
   \begin{equation}
       \lambda(r)\le \frac{r + \sqrt{r+2}}{1+\sqrt{r+2}}.
   \end{equation}
\end{theorem}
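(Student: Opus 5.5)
We plan to go through the standard matrix reformulation of $\lambda(r,n)$ due to Chalmers--Lewicki and B.~L.~Chalmers, as used by Der\c{e}gowska and Lewandowska. For fixed $n\ge r$ one has
\[
\lambda(r,n)=\max\Big\{\sum_{i,j=1}^n t_i t_j |U_{ij}| \;:\; t\in\mathbb{R}^n_{\ge 0},\ \textstyle\sum_i t_i^2=1,\ U \text{ an orthogonal projection of rank } r \text{ on } \mathbb{R}^n\Big\}.
\]
Equivalently, writing $|U_{ij}| = \epsilon_{ij}U_{ij}$ with a symmetric sign matrix $\epsilon$ (diagonal entries $+1$), this is the maximum of $\langle \epsilon \circ (tt^{T}), U\rangle$. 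For a fixed sign pattern, the maximum over rank-$r$ projections $U$ is the sum of the $r$ largest eigenvalues of $D_tSD_t$, where $S$ is a Seidel-type matrix with $\pm1$ off-diagonal entries and $1$ on the diagonal, and $D_t=\diag(t)$. Since $\lambda(r)=\sup_n\lambda(r,n)$ by \eqref{eq:sup_r_d_equal_absolute}, the bound follows once we show that for every $n$, every unit vector $t\ge 0$ and every such $S$,
\[
\sum_{k=1}^r \mu_k(D_tSD_t)\le \frac{r+\sqrt{r+2}}{1+\sqrt{r+2}},
\]
where $\mu_1\ge\mu_2\ge\cdots$ are the eigenvalues.

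Set $M=D_tSD_t$. The spectral data we control are as follows.
\begin{itemize}
\item $\tr M=\sum_i t_i^2=1$.
\item $\tr M^2=\sum_{i,j}t_i^2t_j^2=\big(\sum_i t_i^2\big)^2=1$, because every entry of $S$ is $\pm1$.
\end{itemize}
Let $a=\sum_{k\le r}\mu_k$ and let the remaining eigenvalues sum to $1-a$. The trick is a shift: pick a parameter $c$ and consider $M+cI$, or equivalently bound $\sum_{k\le r}(\mu_k-c)$ via Cauchy--Schwarz on the top $r$ eigenvalues together with the bottom ones.

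Concretely, the sum of the top $r$ eigenvalues and the sum of the rest are related by
\[
\sum_k \mu_k^2=1, \qquad \sum_{k\le r}\mu_k^2\ge \frac{a^2}{r}, \qquad \sum_{k>r}\mu_k^2\ge \frac{(1-a)^2}{n-r}.
\]
The last inequality degrades as $n$ grows, so a better device is needed. Write $\sum_{k\le r}\mu_k=\sum_{k\le r}(\mu_k-c)+rc$. Then
\[
\sum_{k\le r}(\mu_k-c)\le \sum_k(\mu_k-c)_+\le \sqrt{r}\,\Big(\sum_k(\mu_k-c)^2\Big)^{1/2}.
\]
However, $\sum_k(\mu_k-c)^2=1-2c+nc^2$ involves $n$. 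Instead, we use that $M$ is positive-semidefinite-ish in trace norm: the negative part of the spectrum is bounded, since $\sum_k \mu_k = 1$ and $\sum_k\mu_k^2=1$ force $\sum_k|\mu_k|\ge 1$.

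The cleanest route we expect is to optimize directly: maximize $a=\sum_{k\le r}\mu_k$ subject to $\sum_k\mu_k=1$ and $\sum_k\mu_k^2=1$. At an optimum the top $r$ eigenvalues are equal to some $x$, and the remaining mass $1-a$ sits in as few eigenvalues as possible. Putting it into a single negative eigenvalue $y=1-rx$ gives the constraint $rx^2+(1-rx)^2=1$, so $x=2/(r+1)$ and $a=2r/(r+1)$. That is weaker than the target, so the two trace identities alone are insufficient.

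The extra ingredient is the diagonal constraint $M_{ii}=t_i^2$ with $|M_{ij}|=t_it_j$, equivalently $M=D_tSD_t$ with $|S_{ij}|=1$. The idea is to take the top-$r$ spectral projection $P$ and write $a=\langle M,P\rangle=\sum_{i,j}t_it_jS_{ij}P_{ij}$. Then split this into a diagonal part $\sum_i t_i^2P_{ii}$ and an off-diagonal part bounded by Cauchy--Schwarz:
\[
\Big|\sum_{i\ne j} t_it_jS_{ij}P_{ij}\Big|\le \Big(\sum_{i\ne j}t_i^2t_j^2\Big)^{1/2}\Big(\sum_{i\ne j}P_{ij}^2\Big)^{1/2}.
\]
Now use $\sum_{i,j}P_{ij}^2=r$ and $\sum_i P_{ii}=r$ with $0\le P_{ii}\le1$. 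Setting $p_i=P_{ii}$ and $s=\sum_i t_i^4$, we get
\[
a\le \sum_i t_i^2p_i+\sqrt{1-s}\,\sqrt{r-\textstyle\sum_i p_i^2}.
\]
This is a finite-dimensional optimization whose maximum should be $\frac{r+\sqrt{r+2}}{1+\sqrt{r+2}}$. The main obstacle is carrying out this maximization over $t$ and $p$ uniformly in $n$. We expect the extremum at all $t_i$ equal on a support of size $m$ and all $p_i=r/m$, reducing to a one-variable calculus problem in $m$ whose optimum yields the stated formula (via $m$ solving the equiangular-tight-frame relation). Making this symmetrization rigorous, for instance via Cauchy--Schwarz or a Lagrange multiplier argument, is the delicate step.
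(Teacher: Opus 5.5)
There is a genuine gap: your final relaxed problem does not have the stated bound as its maximum. Your reduction to bounding $\sum_{k\le r}\mu_k(D_tSD_t)$ is valid, and so is the inequality $a\le\sum_i t_i^2p_i+\sqrt{1-s}\,\sqrt{r-\sum_i p_i^2}$. The trouble is that this relaxation only retains $\sum_i t_i^2=1$, $\sum_i p_i=r$, $0\le p_i\le 1$ and $\sum_{i,j}P_{ij}^2=r$.

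Split $(t_i^2)_i$ and $(p_i)_i$ into their components along $\mathbf{1}$ and orthogonal to $\mathbf{1}$, and apply Cauchy--Schwarz in $\mathbb{R}^{n+1}$. This shows that the maximum of your right-hand side over $n$ coordinates is attained at $t_i^2=1/n$, $p_i=r/n$, where it equals
\[
g(n)=\frac{r+\sqrt{r(n-1)(n-r)}}{n}.
\]
For $r\ge 2$, $g$ is strictly increasing with $g(n)\to\sqrt r$. For instance, for $r=2$ already $g(4)=\frac{1+\sqrt3}{2}>\frac43$. So the one-variable problem in $m$ you anticipate has no interior optimum. Uniformly in $n$, your route yields only the Kadets--Snobar bound $\sqrt r$.

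Your eigenvalue-only attempt has the same defect. Under $\sum_k\mu_k=\sum_k\mu_k^2=1$, the top-$r$ sum is maximized by spreading the remaining mass over all $n-r$ eigenvalues, not by concentrating it, and the maximum is again exactly $g(n)$. The value $2r/(r+1)$ you found is not that maximum, and it is not larger than the target, so calling it "weaker" is backwards.

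The target equals $g\bigl(\binom{r+1}{2}\bigr)$, the value produced by an equiangular tight frame of $\binom{r+1}{2}$ vectors in $\mathbb{R}^r$. That integer cannot come out of calculus in $m$: it is Gerzon's bound. It reflects $\operatorname{rank}P=r$ through the identity $P_{ij}^2=\ip{\x_i\x_i^\top}{\x_j\x_j^\top}$, where $P=UU^\top$ and $\x_i^\top$ are the rows of $U$. Thus $P\circ P$ is a Gram matrix in the $\binom{r+1}{2}$-dimensional space $\mathbb{S}^{r\times r}$.

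Your relaxation uses the rank only through $\tr P=\|P\|_F^2=r$, which is blind to this. At $t_i^2=1/n$, $p_i=r/n$, equality in your Cauchy--Schwarz step would require $|P_{ij}|$ to be constant off the diagonal. That means an equiangular tight frame of $n$ vectors in $\mathbb{R}^r$, which is impossible for $n>\binom{r+1}{2}$, yet the relaxation still reports $g(n)$.

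The missing ingredient is a quantitative argument that turns this dimension count into the cap $g\bigl(\binom{r+1}{2}\bigr)$ for every $n$. That is the entire difficulty of the theorem. No symmetrization of your relaxed problem can supply it, since the relaxed problem itself has the wrong supremum. (The paper does not prove this statement; it quotes it from Der\c{e}gowska and Lewandowska, so there is no in-paper proof to compare against.)
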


The equality case for the inequality in Theorem \ref{thm:D_L_bound} was characterized by Kobos \cite{Kobos_2025}. Refer to Kobos \cite{Kobos_2025} for an interesting and detailed overview of the above results. Recently, Sivashankar, Tang and Wakhare \cite{Sivashankar_Tang_Wakhare_2026} (see also \cite{Kumar_Mohar_Mojallal_Pragada_2026}) have exploited the connection between projection constants and graph eigenvalues to make significant progress on the Hong--Nikiforov problem for the $k$-th eigenvalue of graphs. They also improved the bound in Theorem \ref{thm:D_L_bound} in certain cases.  

\subsection{Stability of $\lambda(r,n)$: Problem and results}

As discussed earlier, for any fixed $r$, the sequence $\lambda(r,n)$ is a non-decreasing convergent sequence in $n$ with limit $\lambda(r)$. The known values of $\lambda(r)$ for $r\in \{1,2,3,7, 23\}$ suggest that $\lambda(r,n)$ stabilizes once $n\ge \frac{r(r+1)}{2}$. More precisely, the following is known.
\begin{enumerate}[$(i)$]
   
    \item $\lambda(1,n) = 1$ $(n\ge 1)$. Follows from the Hahn--Banach Theorem.
    \item $\lambda(2,n) = \frac{4}{3}$ $(n\ge 3)$. See Chalmers and Lewicki \cite{Chalmers_Lewicki_2010}.
    \item $\lambda(3,n) = \frac{1+\sqrt{5}}{2}$ $(n\ge 6)$. See Der\c{e}gowska and Lewandowska \cite{Deregowska_Lewandowska_2023}. 
    \item $\lambda(7,n) = \frac{5}{2}$ $(n\ge 28)$. See Der\c{e}gowska and Lewandowska \cite{Deregowska_Lewandowska_2023}.
    \item $\lambda(23, n) = \frac{14}{3}$ $(n\ge 276)$. See Der\c{e}gowska and Lewandowska \cite{Deregowska_Lewandowska_2023}.
\end{enumerate}
 
These observations suggest a natural question, namely, does the sequence $\lambda(r,n)$ always stabilize? Basso \cite{Basso_2019} claimed a proof of this, but later reported an error \cite{Basso_2024_corrigendum} (cf. \cite{Kobos_2025}). Basso \cite{Basso_2024_corrigendum} then formally posed the following equivalent question about the stability of $\lambda(r,n)$.

\begin{problem}[Basso {\cite[Question A]{Basso_2024_corrigendum}}] Is it true that for any $r$, there exists $n_r > r$ such that $\lambda(r,n) = \lambda(r)$ for all $n\ge n_r$? What is the smallest value of $n_r$?
\end{problem}

In this article, we address this problem and show the following. 

\begin{theorem}\label{thm:stability_rel_proj_constant} For $r\in \mathbb{N}$, let 
\[N_r:= 2^{r}{r+1\choose 2}.\]
Then, for any $n\ge N_r$, we have 
\[\lambda(r,n) = \lambda(r).\]
\end{theorem}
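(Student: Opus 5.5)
The plan is to work with the standard finite-dimensional reformulation of $\lambda(r,n)$ due to Chalmers--Lewicki and Basso:
\[
\lambda(r,n)=\max\Big\{\sum_{i,j=1}^n t_it_j\,|\langle u_i,u_j\rangle| \;:\; t\in\mathbb{R}^n_{\ge0},\ \|t\|_2=1,\ u_1,\dots,u_n\in\mathbb{R}^r,\ \sum_{i=1}^n u_iu_i^{T}=I_r\Big\}.
\]
Here the $u_i$ are the rows of an $n\times r$ matrix $U$ with orthonormal columns. Equivalently, $\lambda(r,n)$ is the maximum over $t$ and over symmetric sign matrices $E$ with unit diagonal (Seidel-type matrices) of the sum of the $r$ largest eigenvalues of $D_tED_t$. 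I will take this formula as known, together with monotonicity $\lambda(r,n)\le\lambda(r,n+1)$ and \eqref{eq:sup_r_d_equal_absolute}. With these, the statement reduces to one claim: for every $n>N_r$, any configuration $(t,U)$ on $n$ indices can be replaced by one on at most $N_r$ indices with no smaller objective value. Monotonicity and \eqref{eq:sup_r_d_equal_absolute} then give $\lambda(r,n)=\lambda(r,N_r)=\lambda(r)$ for all $n\ge N_r$.

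First, fix an optimal $(t,U)$ for size $n$ and freeze the signs $\varepsilon_{ij}=\sgn\langle u_i,u_j\rangle$. I then rewrite the objective in a form where the index-dependent data enters linearly. Let $P=UU^T$ and let $M=(t_it_j\varepsilon_{ij})$. The value equals $\operatorname{tr}(PM)$, and at the optimum $P$ is the spectral projection onto the top $r$ eigenvectors of $M$. Let $y_1,\dots,y_r$ be an orthonormal basis of that eigenspace and set $z=\sum_j t_j\varepsilon_{\cdot j}u_j$. Each index $i$ then contributes a quantity built from $t_i$ and the rank-one matrix $u_iu_i^T$. I will group the indices according to the sign pattern in $\{\pm1\}^r$ of a suitable vector attached to $i$: I would first try $\sgn(U^T e_i)$ after rotating $U$ into the eigenbasis of the symmetric matrix $\sum_i t_i\,\sgn(\cdot)\,u_iu_i^T$. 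Within one sign class the signs $\varepsilon_{ij}$ relevant to the objective are constant, so on that class the objective becomes a linear function of the weights $c_i=t_i\|u_i\|$.

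Second, within each of the at most $2^r$ sign classes, apply Carath\'eodory's theorem in the space of symmetric $r\times r$ matrices, which has dimension $\binom{r+1}{2}$. The constraint $\sum_i u_iu_i^T=I_r$ and the linearized objective are preserved if, inside a class, the weights are moved to a vertex of the feasible polytope. Such a vertex has support of size at most $\binom{r+1}{2}$, because the objective is linear on the class and the constraint has $\binom{r+1}{2}$ scalar coordinates. Since the class structure is kept, the true objective with absolute values is at least the linearized one. After the reduction, renormalise $t$, which can only help by homogeneity. The result is a configuration supported on at most $2^r\binom{r+1}{2}=N_r$ indices; pad it with zero rows to size $N_r$.

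The main obstacle is the linearization. The objective is quadratic in the pair $(t,u_i)$, and the freezing of signs must be chosen so that, on each of the $2^r$ classes, the per-index contribution really is linear in a single nonnegative weight while the constraint stays linear in the same weights. This is exactly where a simultaneous choice of both $t_i$ and $u_iu_i^T$ must be absorbed into one rank-one matrix with weight. My first attempt would be to use the first-order (Lagrange) optimality conditions: they force $t_i$ to be proportional to $|\langle u_i,z\rangle|$-type quantities, so that $t$ is determined by $U$ and the objective becomes a convex function of $P$. Then I maximize that convex function over the convex set of "measures" $\sum_i c_i\,x_ix_i^T=I$ split by orthant. A maximum of a convex function is attained at an extreme point, which is what yields the support bound $2^r\binom{r+1}{2}$.
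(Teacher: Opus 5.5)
\textbf{There is a genuine gap: the key step, that reweighting inside one orthant class does not lower the objective, is never proved.}

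Your overall architecture matches the paper's. You reduce to $\lambda(r,n)\le\lambda(r,N_r)$. You group the rows $u_i=U^\top e_i$ by the orthant of $\mathbb{R}^r$ containing them; no rotation is needed, since two vectors in the same orthant always have nonnegative inner product. Then you apply Carath\'eodory on each of the $2^r$ classes in the $\binom{r+1}{2}$-dimensional space of symmetric matrices. The problem is the step that actually carries the proof, and the assertions you make for it do not hold.

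\emph{The objective is not linear.} If a class $C$ is reweighted by $\widetilde u_i=\sqrt{c_i}\,u_i$ and $\widetilde t_i=\sqrt{c_i}\,t_i$ (the scaling that makes $\sum_i\widetilde u_i\widetilde u_i^\top$ linear in $c$), the objective becomes $\sum_{i,j}\gamma_i\gamma_j t_it_j|\ip{u_i}{u_j}|$, with $\gamma_i=c_i$ on $C$ and $\gamma_i=1$ off $C$. This is quadratic in $c$, so your LP-vertex argument does not apply.

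\emph{Renormalising need not help.} The quantity $\|\widetilde t\|_2^2=\sum_i\gamma_it_i^2$ changes with $c$. Renormalising helps only if it does not exceed $1$, and nothing in your argument controls this.

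\emph{The orthants are not there to freeze signs.} Rescaling rows by nonnegative numbers never changes any $\varepsilon_{ij}$. Their real role is to make the within-class second-order term a Gram form.

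\textbf{The missing idea is to use the optimality conditions of a maximizing pair.} In your notation, $t$ is a Perron eigenvector of $B=(|\ip{u_i}{u_j}|)_{i,j}$ with eigenvalue $\lambda(r,n)$. The Ky Fan equality you already noted gives $MU=UH$ with $H=U^\top MU$ symmetric. Your vector $z$, taken at index $i$, satisfies $t_iz=Hu_i$, and pairing with $u_i$ gives $u_i^\top Hu_i=\lambda(r,n)t_i^2$. Hence, for every $c\ge0$ with $c_i=1$ off $C$ and $\sum_{i\in C}c_iu_iu_i^\top=\sum_{i\in C}u_iu_i^\top$,
\[
\lambda(r,n)\sum_{i\in C}(c_i-1)t_i^2=\tr\Bigl(H\sum_{i\in C}(c_i-1)u_iu_i^\top\Bigr)=0.
\]
Since $\lambda(r,n)\ge1$, this gives $\|\widetilde t\|_2=1$ exactly. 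The equation $Bt=\lambda(r,n)t$ then kills the first-order term of $\sigma(\widetilde U,\widetilde t)-\sigma(U,t)$. What remains is $\bigl\|\sum_{i\in C}(c_i-1)t_iu_i\bigr\|_2^2\ge0$, precisely because inner products within an orthant are nonnegative. So any Carath\'eodory vector $c$ works, and no optimization over $c$ is needed.

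\textbf{Your convexity idea can also be rescued, but in the variable $c$, not $P$.} The map $c\mapsto P$ is not affine, so convexity in $P$ gives nothing here. On the class, however:
\begin{enumerate}[$(i)$]
\item $\sigma$ is a convex quadratic in $c$, whose Hessian is twice the Gram matrix of $\{t_iu_i\}_{i\in C}$;
\item $\|\widetilde t\|_2^2$ is affine in $c$;
\item hence $\sigma-\sigma(U,t)\,\|\widetilde t\|_2^2$ is convex, vanishes at $c=\mathbf{1}$, and is nonnegative at some vertex of the polytope.
\end{enumerate}
The polytope is bounded once zero rows are discarded, its vertices have support at most $\binom{r+1}{2}$, and a little care is needed at vertices where $\widetilde t$ vanishes. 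As written, though, the proposal names this obstacle without overcoming it.
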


Our proof of Theorem \ref{thm:stability_rel_proj_constant} makes use of a characterization for $\lambda(r,n)$ as the sum of the top $r$ eigenvalues of certain weighted Seidel adjacency matrices of graphs of order $n$ (Theorem \ref{thm:relative_top_r_sum_characterization}). So the stability problem for $\lambda(r,n)$ is equivalent to showing that in order to determine $\lambda(r)$, it suffices to work with (weighted) graphs of order at most $N_r$. Such reductions have been previously used in spectral graph theory for graph eigenvalue extremization problems; indeed, the problem of estimating a certain function of graph eigenvalues over \emph{all} graphs is often reduced to a \emph{finite} computational problem for weighted graphs of certain finite order. Such instances include Terpai's \cite{Terpai_2011} 
(cf. \cite{Liu_2024}) maximization proof of the Nordhaus--Gaddum sum for spectral radius; Breen, Riasanovsky, Tait and Urschel's \cite{Breen_Riasanovsky_Tait_Urschel_2022} maximization 
proof of the \emph{graph spread}; and more recently Kumar, Liu, Monterde, Pragada and Tait's \cite{Kumar_Liu_Monterde_Pragada_Tait_2026}  maximization of the \emph{spectral sum} of graphs. Our proof closely follows the ideas in \cite{Kumar_Liu_Monterde_Pragada_Tait_2026}, where Carath\'eodory's Theorem is used to arrive at the finite reduction. 

It is known that for $n\ge r$ there always exists an $r$-dimensional subspace $Y$ of $\ell_\infty^n$ such that $\lambda(Y, \ell_\infty^n) = \lambda(r,n)$ (cf. \cite{Kobos_2025}), equivalently, the supremum in the definition of $\lambda(r,n)$ in \eqref{eq:defn} is actually a maximum. As a consequence of Theorem \ref{thm:stability_rel_proj_constant}, there always exists an $r$-dimensional subspace $Y$ of $\ell_\infty^n$ such that $\lambda(Y, \ell_\infty^n) = \lambda(Y, \ell_\infty(\mathbb{N})) = \lambda(r)$.

Moreover, we have also shown that the minimum value $n_r$ that guarantees stability satisfies
\[ n_r\le 2^r{r+1\choose 2}.\]
It is possible to obtain minor improvements to the above bound by a careful analysis of our proofs. But it does not seem easy to us to establish a bound for $n_r$ which is polynomial in $r$. The known values of $\lambda(r)$ mentioned above suggest that the following question may have an affirmative answer. 

\begin{problem}
Is it true that $n_r = O(r^2)$ as $r\to \infty$? 
\end{problem}

The remainder of the article is devoted to a proof of Theorem \ref{thm:stability_rel_proj_constant}.

\section{Proof of Theorem \ref{thm:stability_rel_proj_constant}}

Fix $r, n\in \mathbb{N}$ such that $n\ge r$. Let $M\in \mathbb{R}^{n\times n}$ be a symmetric matrix. Then the eigenvalues of $M$ are real, which we enumerate as 
\[\lambda_1(M)\ge \ldots\ge \lambda_n(M).\]

Let $G = (V(G), E(G))$ be a graph of order $n$ with vertex set $V(G) = \{1, \ldots, n\}$. We write $i\sim j$ if vertices $i$ and $j$ are adjacent; we write $i\nsim j$ otherwise. The \emph{Seidel adjacency matrix} of $G$ is the $n\times n$ matrix $S(G)$ with entries 
\[S(G)_{ij} = 
\begin{cases}
    0 & \text{if }i=j;\\
    -1 & \text{if }i\sim j;\\
    1 & \text{if }i\nsim j. 
\end{cases}\]
Let $\mathcal{G}(n)$ denote the set of all simple graphs of order $n$. Let $I_n$ denote the identity matrix of order $n$. Let $\w\in \mathbb{R}^n$ and $U\in \mathbb{R}^{n\times r}$. Define $D_{\w}:=\diag(\w)$. We say that the vector $\w$ is a \emph{feasible vector} if $\w$ is a unit non-negative vector. We say that $U$ is a \emph{feasible matrix} if $U^\top U = I_r$. For given feasible $U$ and $\w$, define 
\[ \sigma(U, \w):=\sum_{i,j=1}^n w_iw_j|\ip{\x_i}{\x_j}|,\]
where $\x_i^\top$ is the $i$-th row of $U$. For a feasible vector $\w$ and a graph $G\in \mathcal{G}(n)$, define
\[ M(G, \w):= D_{\w}(I_n+S(G))D_{\w}.\]

Chalmers and Lewicki \cite{Chalmers_Lewicki_2010} (see also Foucart and Skrzypek \cite{Foucart_Skryzpek_2017}) established the following connection between the relative projection constant $\lambda(r,n)$, the sum of the top $r$ eigenvalues of weighted Seidel matrices of order $n$ and the quantity $\sigma(U, \w)$ for feasible $U$ and $\w$.

\begin{theorem}[Chalmers--Lewicki \cite{Chalmers_Lewicki_2010}, cf. {\cite[Theorem 1]{Foucart_Skryzpek_2017}}]\label{thm:relative_top_r_sum_characterization} For $n\ge r$, we have 
\begin{align*}
    \lambda(r,n) & = \max \left\{\sum_{i=1}^r \lambda_i(M(G, \w)):\, \w \in \mathbb{R}_{\ge 0}^n,\, \|\w\|_2= 1,\, G\in \mathcal{G}(n)\right\}\\
    & = \max\left\{\sigma(U, \w): \, \w \in \mathbb{R}_{\ge 0}^n,\, \|\w\|_2= 1,\, U\in \mathbb{R}^{n\times r},\, U^\top U = I_r\right\},
\end{align*}
where $\x_i^\top$ is the $i$-th row of $U$.
\end{theorem}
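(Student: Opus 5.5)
The natural route is to prove the second equality by a purely linear-algebraic argument, and then to prove the first by convex duality applied to the minimal-projection problem. Throughout, $\x_i^\top$ denotes the $i$-th row of $U$.

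\emph{Eigenvalue form equals $\sigma$-form.} Fix a feasible $\w$. By Ky Fan's maximum principle,
\[\sum_{i=1}^r \lambda_i(M(G,\w)) = \max\{\tr(U^\top M(G,\w) U): U^\top U = I_r\}.\]
Set $s_{ij}=S(G)_{ij}$ for $i\neq j$ and $s_{ii}=1$. Then
\[\tr(U^\top M(G,\w)U)=\sum_{i,j} w_iw_j s_{ij}\ip{\x_i}{\x_j}.\]
We now exchange the two maxima and first maximize over $G$ with $U$ and $\w$ fixed. This amounts to choosing the off-diagonal signs $s_{ij}=s_{ji}\in\{\pm1\}$ freely. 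The optimal choice is $s_{ij}=\sgn\ip{\x_i}{\x_j}$, which is symmetric. On the diagonal, $\ip{\x_i}{\x_i}\ge 0$ already. Hence the maximum over $G$ equals $\sigma(U,\w)$, and taking the maximum over $U$ and $\w$ gives the second equality. All maxima are attained because the sets involved are compact (or finite) and the functions are continuous.

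\emph{Projection constant equals $\sigma$-form.} Write $Y=\mathrm{col}(U)$ with $U^\top U=I_r$; every $r$-dimensional subspace arises this way. Every projection onto $Y$ has the form $P=UV^\top$ with $V\in\mathbb{R}^{n\times r}$ and $V^\top U=I_r$. Its $\ell_\infty^n$ operator norm is
\[\|P\|=\max_i\sum_j|\ip{\x_i}{\v_j}|,\]
where $\v_j^\top$ is the $j$-th row of $V$. This maximum can be written as $\max\ \sum_{i,j}p_i\varepsilon_{ij}\ip{\x_i}{\v_j}$, taken over probability vectors $p$ and signs $\varepsilon_{ij}\in\{\pm1\}$ (equivalently, over their convex hull).

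We would then apply a minimax theorem. The objective is bilinear in $V$ and in the dual variable $Q=(p_i\varepsilon_{ij})$, the dual set is compact and convex, and the feasible set for $V$ is affine. This gives
\[\lambda(Y,\ell_\infty^n)=\max_Q\ \min_{V^\top U=I}\tr(V^\top Q^\top U).\]
The inner minimum of a linear functional over an affine set is finite only when $Q^\top U = U\Lambda$ for some $r\times r$ matrix $\Lambda$, and in that case it equals $\tr\Lambda=\tr(U^\top Q^\top U)$. So
\[\lambda(Y,\ell_\infty^n)=\max\{\tr(U^\top Q U): Q^\top U\subseteq \mathrm{col}(U)\},\]
with $Q$ ranging over matrices whose rows have the form $p_i\varepsilon_{i\cdot}$.

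Maximizing over $Y$ (that is, over $U$) then yields $\lambda(r,n)$. The upper bound $\lambda(r,n)\le\max\sigma$ should follow from a Cauchy--Schwarz/AM--GM symmetrization. Setting $w_i=\sqrt{p_i}$ and using the invariance condition, we aim to bound $\tr(U^\top QU)$ by $\sigma(U',\w)$ for a suitably reweighted orthonormal $U'$, for instance by rescaling $D_{\w}U$ or $D_{\w}^{-1}U$ and re-orthonormalizing. For the lower bound we would take a maximizer $(U,\w)$ of $\sigma$. Using the first-order optimality conditions for $\sigma$ on the product of the sphere and the Stiefel manifold, we would build the subspace $Y=\mathrm{col}(D_{\w}^{-1}U)$ when $\w>0$ (handling zero weights by restricting to the support) together with the dual certificate $Q_{ij}=w_iw_j\sgn\ip{\x_i}{\x_j}$, suitably scaled, and check that it is feasible.

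The main obstacle is this last step: matching the Lagrange condition $Q^\top U\subseteq\mathrm{col}(U)$ with the stationarity equations of $\sigma$. That is, one must show that at a maximizer of $\sigma$ the signed weighted Gram structure makes the columns of $U$ invariant, so that the dual value is exactly attained and equals $\sigma(U,\w)$. This is the core of Chalmers--Lewicki's argument. I would first try the case where all $w_i>0$ and all $\ip{\x_i}{\x_j}\neq 0$, where $\sigma$ is smooth, and then reach the degenerate cases by a perturbation and continuity argument.
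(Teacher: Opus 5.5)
Your first part is correct and is exactly the Ky--Fan link the paper alludes to; the sign choice $s_{ij}=\sgn\ip{\x_i}{\x_j}$ is optimal because $w_iw_j\ge 0$. The paper itself does not prove the identification with $\lambda(r,n)$; it quotes it from Chalmers--Lewicki and Foucart--Skrzypek. That identification is therefore what a self-contained proof must supply, and your proposal does not supply it.

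Your dual formula is fine (Sion applies because the dual ball is compact), provided $Q$ ranges over the whole ball $\{Q:\sum_i\max_j|Q_{ij}|\le 1\}$. Once the linear constraint $Q^\top U=U\Lambda$ is imposed, you may no longer restrict to rows $p_i\varepsilon_{i\cdot}$ with $\varepsilon_{ij}=\pm1$. Beyond that, both inequalities between $\lambda(r,n)$ and $\max\sigma$ are only announced.

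\emph{Lower bound.} The certificate you name fails. Put $\widehat{S}:=I_n+S(G)$ with $\widehat{S}_{ij}=\sgn\ip{\x_i}{\x_j}$, $Y=\mathrm{col}(D_{\w}^{-1}U)$ and $Q=\alpha D_{\w}\widehat{S}D_{\w}$. Then $Q^\top D_{\w}^{-1}U=\alpha D_{\w}\widehat{S}U$, which in general is not in $\mathrm{col}(D_{\w}^{-1}U)$. The choice that works is $Q=D_{\w}^{2}\widehat{S}$, admissible since $\sum_i w_i^2=1$. It gives $Q^\top D_{\w}^{-1}U=D_{\w}^{-1}M(G,\w)U=D_{\w}^{-1}UH$, and weak duality alone yields $\lambda(Y,\ell_\infty^n)\ge\tr H=\sigma(U,\w)$. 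The invariance $M(G,\w)U=UH$ that you single out as the main obstacle comes for free. At a maximizer of $\sigma$ it follows from the equality case of Ky--Fan, exactly as in the derivation of \eqref{eq:H_equation} with $\max\sigma$ in place of $\lambda(r,n)$. So no stationarity, smoothness or perturbation argument is needed; zero weights only require adjusting $Y$ and $Q$ off the support of $\w$.

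\emph{Upper bound.} This is where the missing idea lies, and your proposed \emph{Cauchy--Schwarz/AM--GM symmetrization} points the wrong way. With $p_i=w_i^2$, AM--GM gives $\sigma(U,\w)\le\sum_{i,j}p_i|\ip{\x_i}{\x_j}|$, which is the wrong direction. Moreover, the maximum of the right-hand side over probability vectors $p$ is the $\ell_\infty^n$-norm of the orthogonal projection $UU^\top$. For $r=1$ this can be as large as $(1+\sqrt{n})/2$, exceeding $\lambda(1,n)=1$ once $n\ge 2$. So the invariance must be used in an essential way.

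One way to do it: for an arbitrary $Y=\mathrm{col}(U)$ and an optimal dual pair $(Q,\Lambda)$, write $Q=D_pE$ with $|E_{ij}|\le 1$, and set $\w=\sqrt{p}$ entrywise and $K:=D_{\w}E^\top D_{\w}$. Then $Q^\top U=U\Lambda$ gives $K(D_{\w}U)=(D_{\w}U)\Lambda$. Let $\widetilde{U}$ be an orthonormal basis of $\mathrm{col}(D_{\w}U)$ with rows $\widetilde{\x}_i^\top$. Since $|K_{ij}|\le w_iw_j$, we get $\tr\Lambda=\tr(\widetilde{U}^\top K\widetilde{U})=\sum_{i,j}K_{ij}\ip{\widetilde{\x}_i}{\widetilde{\x}_j}\le\sum_{i,j}w_iw_j|\ip{\widetilde{\x}_i}{\widetilde{\x}_j}|\le\|\w\|_2^2\max\sigma\le\max\sigma$.

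If $D_{\w}U$ has rank $r'<r$, first note that $\Lambda v=0$ for $v\in\ker(D_{\w}U)$, since $U\Lambda v=E^\top D_pUv=0$. Then pad $\widetilde{U}$ with $r-r'$ vectors $e_k$, $k\notin\mathrm{supp}(\w)$. Enough such vectors exist because $\mathrm{rank}\,U=r$, and they contribute nothing to the trace since $Ke_k=0$. Without these two arguments your proposal establishes only that the two maxima coincide, not that they equal $\lambda(r,n)$.
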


\begin{remark} \label{remark:lambda_is_positive}
For every \(n\ge r\), we have
$\lambda(r,n)\ge \lambda(r,r)=1$.
Indeed, if \(n=r\), then every feasible matrix \(U\) is orthogonal, and hence
\[
\sigma(U,\w)
=\sum_{i,j=1}^r w_iw_j
 \left|\ip{\x_i}{\x_j}\right|
=\sum_{i=1}^r w_i^2
=1.
\]
Thus, \(\lambda(r,r)=1\). For any fixed $r$, the sequence $\lambda(r,n)$ is a non-decreasing convergent sequence in $n$, which gives $\lambda(r,n)\ge 1$.
\end{remark}

The two characterizations for $\lambda(r,n)$ in Theorem \ref{thm:relative_top_r_sum_characterization} are essentially an application of the following Ky--Fan maximum principle.

\begin{theorem}[Ky--Fan maximum principle {\cite[Corollary~4.3.39]{Horn_Johnson_2013}}]\label{thm:Ky_Fan}
Let $M\in\mathbb{R}^{n\times n}$ be a symmetric matrix. Then, for every $1\leq k\leq n$,
\[
\sum_{i=1}^k \lambda_i(M)
=
\max_{\substack{U\in\mathbb{R}^{n\times k}\\ U^{\mathsf T}U=I_k}}
\tr\left(U^{\mathsf T}MU\right).
\]
The maximum is attained if and only if the columns of $U$ span an invariant
subspace associated with the $k$ largest eigenvalues of $M$.
\end{theorem}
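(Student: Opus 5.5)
We prove the Ky--Fan maximum principle by reducing it to a linear optimization over a simple polytope. By the spectral theorem, write $M=Q\Lambda Q^{\mathsf T}$ with $Q$ orthogonal, $\Lambda=\diag(\lambda_1,\dots,\lambda_n)$ and $\lambda_1\ge\dots\ge\lambda_n$. For a feasible $U\in\mathbb{R}^{n\times k}$ with $U^{\mathsf T}U=I_k$, put $V=Q^{\mathsf T}U$, so that $V^{\mathsf T}V=I_k$ and
\[
\tr(U^{\mathsf T}MU)=\tr(V^{\mathsf T}\Lambda V)=\sum_{j=1}^n \lambda_j d_j,
\qquad d_j:=\|\text{$j$-th row of }V\|_2^2 .
\]

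Next we record the constraints on the weights $d_j$. First, $\sum_j d_j=\tr(VV^{\mathsf T})=\tr(V^{\mathsf T}V)=k$. Second, $P:=VV^{\mathsf T}$ is the orthogonal projection onto the column space of $V$, so $d_j=e_j^{\mathsf T}Pe_j=\|Pe_j\|^2\in[0,1]$. Hence the trace is at most
\[
\max\Big\{\sum_j\lambda_jd_j:\ 0\le d_j\le 1,\ \textstyle\sum_j d_j=k\Big\}=\sum_{i=1}^k\lambda_i ,
\]
because the linear objective is maximized by putting weight $1$ on the $k$ largest $\lambda_j$. This last fact follows from a one-line exchange argument: writing $\sum_j\lambda_jd_j-\sum_{j\le k}\lambda_j=\sum_{j\le k}(d_j-1)(\lambda_j-\lambda_k)+\sum_{j>k}d_j(\lambda_j-\lambda_k)$, both sums are $\le 0$. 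The upper bound is attained by taking $U$ to be the first $k$ columns of $Q$, which proves the max formula.

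For the equality case, the exchange identity shows that equality holds iff $d_j=1$ whenever $\lambda_j>\lambda_k$ and $d_j=0$ whenever $\lambda_j<\lambda_k$. The condition $d_j=1$ means $Pe_j=e_j$, so $e_j$ lies in the column space of $V$. The condition $d_j=0$ means $e_j\perp$ the column space of $V$.

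Translating back through $Q$, equality holds exactly when the column space $W$ of $U$ has two properties. It contains every eigenspace of $M$ for eigenvalues $>\lambda_k$. It is contained in the sum of the eigenspaces for eigenvalues $\ge\lambda_k$.

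Such a $W$ is the direct sum of the eigenspaces for eigenvalues $>\lambda_k$ and a subspace of the $\lambda_k$-eigenspace. Hence it is $M$-invariant and spanned by eigenvectors for $\lambda_1,\dots,\lambda_k$.

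Conversely, if $W$ is an invariant subspace associated with the $k$ largest eigenvalues, choose an orthonormal eigenbasis of $W$. Computing the trace in that basis gives $\tr(U^{\mathsf T}MU)=\sum_{i\le k}\lambda_i$, since the trace is independent of the orthonormal basis chosen for $W$.

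The main obstacle is this equality characterization. It requires care when $\lambda_k$ is repeated, because the $d_j$ attached to indices with $\lambda_j=\lambda_k$ are then free. This is why the statement speaks of an invariant subspace associated with the top $k$ eigenvalues rather than a unique subspace.
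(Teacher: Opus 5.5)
Your proof is correct and complete. It covers both directions of the equality clause and also the case where $\lambda_k$ is repeated.

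The paper does not prove this statement; it only cites Horn--Johnson. The usual textbook proof behind that citation compares the eigenvalues of the compression $U^{\mathsf T}MU$ with those of $M$ by interlacing (the Poincar\'e separation theorem, $\lambda_i(U^{\mathsf T}MU)\le\lambda_i(M)$ for $i\le k$), and then sums.

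Your route is genuinely different. You pass to an eigenbasis and write the trace as $\sum_j\lambda_jd_j$, where the $d_j$ are the diagonal entries of the projection $VV^{\mathsf T}$. This reduces the problem to a linear program over $\{0\le d_j\le 1,\ \sum_j d_j=k\}$; in effect it is Schur's majorization applied to the projection rather than to $M$. It needs only the spectral theorem.

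Its main advantage is the equality case, which is exactly what the paper uses in Claim 2 to obtain \eqref{eq:H_equation}. Your exchange identity pins equality down as $d_j=1$ for $\lambda_j>\lambda_k$ and $d_j=0$ for $\lambda_j<\lambda_k$. The interlacing argument gives the inequality faster, but needs a separate step to get invariance of the column space from equality.

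One line is worth adding where you conclude that $W$ is spanned by eigenvectors for $\lambda_1,\dots,\lambda_k$. Write $E_{>}$ for the sum of the eigenspaces with eigenvalues $>\lambda_k$, $E_{=}$ for the $\lambda_k$-eigenspace, and $m=\dim E_{>}=\#\{j:\lambda_j>\lambda_k\}$. Then $W=E_{>}\oplus(W\cap E_{=})$ and $\dim W=k$ give $\dim(W\cap E_{=})=k-m$. So this part contributes exactly the eigenvalues $\lambda_{m+1}=\cdots=\lambda_k$, which is what makes $W$ associated with the $k$ largest eigenvalues.
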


We will now prove Theorem \ref{thm:stability_rel_proj_constant} in a series of claims.  

We say that a feasible pair $(U,\w)$ is a \emph{maximizing pair} if $\sigma(U, \w) = \lambda(r,n)$. By Theorem \ref{thm:relative_top_r_sum_characterization}, such a maximizing pair $(U, \w)$ exists. Let $\x_i^\top$ $(1\le i\le n)$ denote the $i$-th row of this $U$. Define $B\in \mathbb{R}^{n\times n}$ to be the matrix whose $ij$-th entry is $|\ip{\x_i}{\x_j}|$. 

\begin{claim}\label{claim:spectral_radius_B}
We have $\lambda(r,n) = \lambda_1(B)$ and $\w$ is a unit $\lambda_1$-eigenvector for $B$. 
\end{claim}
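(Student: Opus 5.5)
Fix the maximizing pair $(U,\w)$ and regard $U$ as frozen, so that $B$ is a fixed symmetric matrix with nonnegative entries. Then
\[
\sigma(U,\w)=\sum_{i,j=1}^n w_iw_j B_{ij}=\w^\top B\w
\]
is a quadratic form in $\w$. The plan is to compare the maximum of this form over all unit vectors with its maximum over the feasible (nonnegative unit) vectors. By the Rayleigh principle, $\w^\top B\w\le \lambda_1(B)$ for every unit $\w$. Since $B$ is entrywise nonnegative, Perron--Frobenius gives a nonnegative unit eigenvector $\z$ with $B\z=\lambda_1(B)\z$. Alternatively, without Perron--Frobenius, take any unit top eigenvector $\z$ and replace it by $|\z|$; then $|\z|^\top B|\z|\ge \z^\top B\z=\lambda_1(B)$, and equality is forced by Rayleigh. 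Thus $\lambda_1(B)$ is attained at a feasible vector.

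Next I would use the fact that $(U,\w)$ is a maximizing pair together with Theorem \ref{thm:relative_top_r_sum_characterization}. Since $(U,\z)$ is feasible, this gives
\[
\lambda_1(B)=\z^\top B\z=\sigma(U,\z)\le \lambda(r,n)=\sigma(U,\w)=\w^\top B\w\le \lambda_1(B).
\]
Hence all of these are equal, and $\lambda(r,n)=\lambda_1(B)$.

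Finally, $\w$ is a unit vector achieving the maximum $\lambda_1(B)$ of the Rayleigh quotient. Expanding $\w$ in an orthonormal eigenbasis of $B$ shows that equality in $\w^\top B\w\le\lambda_1(B)$ forces every component of $\w$ outside the $\lambda_1(B)$-eigenspace to vanish. Therefore $B\w=\lambda_1(B)\w$.

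No step here is a real obstacle. The only point needing care is that the Rayleigh maximum over all unit vectors is already attained on the feasible set, which is what the nonnegativity of $B$ secures.
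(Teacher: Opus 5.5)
Your proposal is correct and follows essentially the same route as the paper: bound $\w^\top B\w\le\lambda_1(B)$, use Perron--Frobenius to get a feasible nonnegative top eigenvector $\z$ with $\sigma(U,\z)=\lambda_1(B)\le\lambda(r,n)$, and conclude equality. Beyond the paper's argument, you spell out why equality in the Rayleigh bound forces $B\w=\lambda_1(B)\w$ (the paper just writes ``the claim follows''), and you give the Perron--Frobenius-free variant via $|\z|$, which is also valid.
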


\begin{proof} By the choice of $(U, \w)$ and the Ky--Fan principle for first eigenvalue, we have 
\[ \lambda(r,n) =\w^\top B\w \le \lambda_1(B).\]
Conversely, let $\z$ be a unit $\lambda_1$-eigenvector of $B$. By the Perron-Frobenius Theorem, $\z$ can be chosen to be non-negative, and so $\z$ is feasible. Thus, by Theorem \ref{thm:relative_top_r_sum_characterization},
\[\lambda(r,n) \ge \z^\top B\z = \lambda_1(B).\]
Thus, 
\[ \lambda(r, n) = \w^\top  B\w = \lambda_1(B),\]
and the claim follows.
\end{proof}

\begin{claim} There exists a graph $G\in \mathcal{G}(n)$ and a symmetric matrix $H\in \mathbb{R}^{r\times r}$ such that 
\begin{equation}\label{eq:H_equation}
    M(G, \w) U = UH.
\end{equation}
\end{claim}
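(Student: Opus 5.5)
The plan is to pick the graph from the signs of the Gram entries, and then use the Ky--Fan maximum principle, Theorem \ref{thm:Ky_Fan}, to show that the column space of $U$ is invariant under $M(G,\w)$.

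\emph{Choice of $G$.} For $i\ne j$, put $i\sim j$ exactly when $\ip{\x_i}{\x_j}<0$. Then $(I_n+S(G))_{ij}\ip{\x_i}{\x_j} = |\ip{\x_i}{\x_j}|$ for all $i,j$. On the diagonal this holds because $(I_n+S(G))_{ii}=1$ and $\|\x_i\|^2\ge 0$; ties $\ip{\x_i}{\x_j}=0$ do not matter. Hence
\[
\tr\bigl(U^\top M(G,\w)U\bigr)=\sum_{i,j} w_iw_j (I_n+S(G))_{ij}\ip{\x_i}{\x_j} = \sigma(U,\w)=\lambda(r,n).
\]

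\emph{$U$ attains the Ky--Fan maximum.} By Theorem \ref{thm:Ky_Fan}, $\tr(U^\top M(G,\w)U)\le \sum_{i=1}^r\lambda_i(M(G,\w))$. By the first characterization in Theorem \ref{thm:relative_top_r_sum_characterization}, the right-hand side is at most $\lambda(r,n)$. The chain is therefore an equality, so $U$ attains the maximum in Theorem \ref{thm:Ky_Fan}. The equality clause of Theorem \ref{thm:Ky_Fan} then says the column space of $U$ is an invariant subspace of $M(G,\w)$ associated with its $r$ largest eigenvalues.

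\emph{Producing $H$.} Invariance means $M(G,\w)U=UH$ for some $H\in\mathbb{R}^{r\times r}$. Multiplying on the left by $U^\top$ and using $U^\top U=I_r$ gives $H=U^\top M(G,\w)U$, which is symmetric because $M(G,\w)$ is.

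The only delicate step is the equality case of Ky--Fan, in case that clause is not taken as given. Self-containedly, I would argue as follows. Write $M=M(G,\w)$ and decompose $U=P_1U+P_2U$, where $P_1$ is the spectral projection onto eigenvalues $\ge\lambda_r(M)$ and $P_2$ the one onto the rest. The standard majorization argument, $\tr(U^\top MU)=\sum_k \lambda_k(M)\|U^\top v_k\|^2$ with weights $\|U^\top v_k\|^2\in[0,1]$ summing to $r$, shows equality forces zero weight on eigenvalues strictly below $\lambda_r(M)$. Hence $P_2U=0$. The column space of $U$ then lies in the span of eigenvectors for eigenvalues $\ge \lambda_r(M)$.

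This containment is enough on its own, even if that span is larger than $r$-dimensional because $\lambda_r(M)=\lambda_{r+1}(M)$. Equality forces the weights to be $1$ on eigenvalues strictly above $\lambda_r(M)$. So the column space of $U$ contains that eigenspace, and its remaining part lies in the $\lambda_r(M)$-eigenspace. A subspace that is the sum of a full spectral subspace and a subspace of a single eigenspace is invariant under $M$, which gives $MU=UH$.
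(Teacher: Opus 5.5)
Your proposal is correct and follows the paper's proof: the same sign-based choice of $G$, the same sandwich $\lambda(r,n)=\tr(U^\top M(G,\w)U)\le\sum_{i=1}^r\lambda_i(M(G,\w))\le\lambda(r,n)$, the equality clause of Ky--Fan, and $H=U^\top M(G,\w)U$. Your self-contained treatment of the Ky--Fan equality case, which the paper simply cites, is also sound; but note that containment of the column space in the span of eigenvectors for eigenvalues $\ge\lambda_r(M)$ is not ``enough on its own'' when $\lambda_r(M)=\lambda_{r+1}(M)$, and invariance really comes from the additional fact, which you do supply, that the weights equal $1$ on eigenvalues strictly above $\lambda_r(M)$.
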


\begin{proof} Consider a graph $G\in \mathcal{G}(n)$ such that 
\[ (I_n+S(G))_{ij} = 
\begin{cases}
1 & \text{ if }\ip{\x_i}{\x_j}\ge 0;\\
-1 & \text{ if }\ip{\x_i}{\x_j}< 0.
\end{cases}\]
Note that 
\begin{align*}
   \lambda(r,n) & = \sigma(U, \w)\\
   & = \tr(U^\top M(G, \w)U)\quad (\text{by the choice of }G)\\
   & \le \sum_{i=1}^r \lambda_i(M(G, \w)) \quad (\text{Ky--Fan principle})\\
   & \le \lambda(r,n)\quad (\text{by Theorem \ref{thm:relative_top_r_sum_characterization}}).   
\end{align*}
Thus, 
\[\tr(U^\top M(G, \w)U) = \sum_{i=1}^r \lambda_i(M(G, \w)).\]
Again, by the Ky--Fan principle, the columns of $U$ span an invariant subspace associated with the $r$ largest eigenvalues of $M(G, \w)$. Therefore,
\[
M(G,\w)U=UH,
\qquad
H:=U^\top M(G,\w)U.
\]
In particular, since \(M(G,\w)\) is symmetric, \(H\in\mathbb{R}^{r\times r}\)
is symmetric.
\end{proof}

\begin{claim}\label{claim:H_quadratic_form} For $i\in [n]$, 
    \[\x_i^\top H\x_i =  \lambda(r,n)w_i^2.\]
\end{claim}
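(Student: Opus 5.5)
The plan is to read off the $i$-th row of the matrix identity \eqref{eq:H_equation}, pair it with $\x_i$, and then use the eigenvector relation from Claim~\ref{claim:spectral_radius_B}.

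Write $s_{ij}:=(I_n+S(G))_{ij}\in\{1,-1\}$, where $G$ is the graph chosen in the previous claim. By construction, $s_{ij}\ip{\x_i}{\x_j}=|\ip{\x_i}{\x_j}|$ for all $i,j$; this holds trivially when the inner product is zero. Since $M(G,\w)=D_{\w}(I_n+S(G))D_{\w}$, its entries are
\[
M(G,\w)_{ij}=w_iw_js_{ij}.
\]
The $i$-th row of $UH$ is $\x_i^\top H$. The $i$-th row of $M(G,\w)U$ is $\sum_{j=1}^n w_iw_js_{ij}\x_j^\top$. Equating them gives
\[
\x_i^\top H=w_i\sum_{j=1}^n w_js_{ij}\x_j^\top .
\]

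Next, multiply this identity on the right by $\x_i$:
\[
\x_i^\top H\x_i=w_i\sum_{j=1}^n w_js_{ij}\ip{\x_j}{\x_i}=w_i\sum_{j=1}^n w_j|\ip{\x_i}{\x_j}|=w_i(B\w)_i .
\]
By Claim~\ref{claim:spectral_radius_B}, $\w$ is a $\lambda_1(B)$-eigenvector of $B$ and $\lambda_1(B)=\lambda(r,n)$. Hence $(B\w)_i=\lambda(r,n)w_i$, and therefore $\x_i^\top H\x_i=\lambda(r,n)w_i^2$.

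There is no real obstacle here. The only point needing care is checking that the sign pattern of $I_n+S(G)$ exactly cancels the signs of the inner products, including the diagonal: there $s_{ii}=1$ and $\ip{\x_i}{\x_i}\ge 0$, so the identity $s_{ij}\ip{\x_i}{\x_j}=|\ip{\x_i}{\x_j}|$ also holds for $j=i$.
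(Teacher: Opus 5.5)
Your proposal is correct and follows the paper's proof: you read off the $i$-th row of $M(G,\w)U=UH$, pair it with $\x_i$, use the sign pattern of $I_n+S(G)$ to turn the inner products into $|\ip{\x_i}{\x_j}|$, and conclude with the eigenvector relation $B\w=\lambda(r,n)\w$ from Claim~\ref{claim:spectral_radius_B}. The only cosmetic difference is that you multiply the row identity on the right by $\x_i$ directly, whereas the paper first transposes using the symmetry of $H$.
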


\begin{proof}
Taking the $i$-th row of \eqref{eq:H_equation} gives
\[
w_i\sum_{j=1}^n w_j(I_n+S(G))_{ij}\x_j^\top
=
\x_i^\top H.
\]
Equivalently, after transposing and using the symmetry of \(H\), we obtain
\[w_i\sum_{j=1}^n w_j(I_n+S(G))_{ij}\x_j = H\x_i.\]
Taking inner product with $\x_i$ on both sides, we get
\begin{align*}
    \x_i^\top H\x_i & = w_i\sum_{j=1}^n w_j(I_n+S(G))_{ij}\ip{\x_i}{\x_j}\\
    & = w_i\sum_{j=1}^n w_j|\ip{\x_i}{\x_j}|\\
    & = w_i (\lambda_1(B)w_i) = \lambda(r,n)w_i^2 \quad (\text{by Claim \ref{claim:spectral_radius_B}}). \qedhere
\end{align*}
\end{proof}

The following claim is the key step in our proof, which allows us to reduce the order of the matrix $U$ that we need to work with to determine $\lambda(r,n)$.

\begin{claim}[Cloning]\label{claim:cloning} Let $C\subseteq [n]$ be such that the following two conditions hold:
\begin{enumerate}[$(i)$]
    \item $\ip{\x_i}{\x_j}\ge 0$ for all $i,j\in C$, and
    \item there exists $\cvec\in \mathbb{R}^n_{\ge 0}$ such that $c_i = 1$ if $i\notin C$ and
    \[\sum_{i\in C}c_i \x_i\x_i^\top = \sum_{i\in C}\x_i\x_i^\top.\]
    \end{enumerate}
    Define $\widetilde{w}$ and $\widetilde{\x}_i$ $(1\le i\le n)$ such that 
    \[ \widetilde{w}_i := \sqrt{c_i}w_i \qquad 
    \widetilde{\x}_i := \sqrt{c_i}\x_i.
    \]
Let $\widetilde{U}\in \mathbb{R}^{n\times r}$ be the matrix whose $i$-th row is $\widetilde{\x}_i$. Then $(\widetilde{U}, \widetilde{w})$ is a maximizing pair for $\lambda(r, n)$.
\end{claim}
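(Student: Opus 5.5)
The plan is to verify directly that $(\widetilde U,\widetilde w)$ is feasible and then to expand $\sigma(\widetilde U,\widetilde w)$ around the original maximizing pair. Write $\Delta_i:=c_i-1$, so that $\Delta_i=0$ for $i\notin C$.

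\emph{Feasibility of $\widetilde U$.} We have $\widetilde U^\top\widetilde U=\sum_{i=1}^n c_i\x_i\x_i^\top$. By condition $(ii)$ this equals $\sum_{i=1}^n \x_i\x_i^\top=U^\top U=I_r$.

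\emph{Feasibility of $\widetilde w$.} Non-negativity is clear since $c_i\ge 0$ and $w_i\ge 0$. For the norm, use Claim~\ref{claim:H_quadratic_form}:
\[
\lambda(r,n)\sum_{i} c_i w_i^2=\sum_i c_i\,\x_i^\top H\x_i=\tr\Bigl(H\sum_i c_i\x_i\x_i^\top\Bigr)=\tr\Bigl(H\sum_i \x_i\x_i^\top\Bigr)=\lambda(r,n)\sum_i w_i^2=\lambda(r,n).
\]
By Remark~\ref{remark:lambda_is_positive}, $\lambda(r,n)\ge 1>0$, so we may divide and obtain $\|\widetilde w\|_2=1$. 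Equivalently, $\sum_{i\in C}\Delta_i w_i^2=0$.

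\emph{The value of $\sigma$.} Since $\sqrt{c_i c_j}\,|\ip{\x_i}{\x_j}|=|\ip{\widetilde\x_i}{\widetilde\x_j}|$, we get
\[
\sigma(\widetilde U,\widetilde w)=\sum_{i,j}c_ic_jw_iw_j|\ip{\x_i}{\x_j}|.
\]
Substitute $c_i=1+\Delta_i$ and expand into three pieces:
\begin{itemize}
\item The zeroth-order term is $\sigma(U,\w)=\lambda(r,n)$.
\item The cross term is $2\sum_i\Delta_iw_i\sum_j w_j|\ip{\x_i}{\x_j}|=2\sum_i\Delta_i w_i(B\w)_i$. By Claim~\ref{claim:spectral_radius_B}, $B\w=\lambda(r,n)\w$, so this equals $2\lambda(r,n)\sum_i\Delta_iw_i^2$, which vanishes by the norm identity above.
\item The quadratic term involves only indices in $C$. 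By condition $(i)$ the absolute values can be dropped there, so it equals $\sum_{i,j\in C}\Delta_i\Delta_jw_iw_j\ip{\x_i}{\x_j}=\bigl\|\sum_{i\in C}\Delta_iw_i\x_i\bigr\|^2\ge 0$.
\end{itemize}
Hence $\sigma(\widetilde U,\widetilde w)\ge\lambda(r,n)$. The reverse inequality holds by Theorem~\ref{thm:relative_top_r_sum_characterization} and the feasibility just shown, so $(\widetilde U,\widetilde w)$ is a maximizing pair.

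The main obstacle is seeing why $\|\widetilde w\|_2=1$ holds at all. It is not immediate from the hypotheses and requires combining the eigen-relation $M(G,\w)U=UH$, through Claim~\ref{claim:H_quadratic_form}, with the frame identity in condition $(ii)$. Once this is in place, the cancellation of the linear term through the Perron eigenvector property and the positivity of the quadratic term through condition $(i)$ are routine.
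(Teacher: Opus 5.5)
Your proposal is correct and follows essentially the same route as the paper. You get feasibility of $\widetilde U$ from condition $(ii)$, obtain $\|\widetilde{\w}\|_2=1$ via Claim~\ref{claim:H_quadratic_form} and the trace identity with $H$, and then expand $c_i=1+\Delta_i$: the linear term vanishes because $B\w=\lambda(r,n)\w$, and condition $(i)$ turns the quadratic term into a squared norm (the only cosmetic difference is that you compute $\lambda(r,n)\|\widetilde{\w}\|_2^2$ directly rather than the difference $\lambda(r,n)(\|\widetilde{\w}\|_2^2-\|\w\|_2^2)$).
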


\begin{proof} First we need to show that $(\widetilde{U}, \widetilde{w})$ is feasible. We have 
\begin{align*}
    \widetilde{U}^\top \widetilde{U} 
    &  = \sum_{i=1}^n \widetilde{\x}_i\widetilde{\x}_i^\top\\
    & = \sum_{i=1}^n c_i\x_i\x_i^\top\\
    & = \sum_{i=1}^n \x_i\x_i^\top \quad(\text{by condition $(ii)$})\\
    & = U^\top U\\
    & = I_r. 
\end{align*}
So $\widetilde{U}$ is feasible. 

Clearly, $\widetilde{w}$ is a non-negative vector. Moreover, 
\begin{align*}
    \lambda(r,n) \left(\|\widetilde{\w}\|_2^2 - \|\w\|_2^2\right)
    & = \lambda(r,n) \sum_{i\in C}(c_i - 1)w_i^2\\
    & = \sum_{i\in C} (c_i-1)\x_i^\top H\x_i\quad (\text{by Claim \ref{claim:H_quadratic_form}})\\
    & = \tr\left(H \sum_{i\in C}(c_i-1)\x_i\x_i^\top \right)\\
    & = 0 \quad (\text{by condition $(ii)$}).
\end{align*}
Since $\lambda(r,n)>0$ by Remark \ref{remark:lambda_is_positive}, we have $\|\widetilde{\w}\|_2^2 = \|\w\|_2^2 = 1$. So $\widetilde{w}$ is also feasible. 

Now, we need to argue that $(\widetilde{U}, \widetilde{w})$ is a maximizing pair. Note that
\[\widetilde{w}_i\widetilde{w}_j|\ip{\widetilde{\x}_i}{\widetilde{\x}_j}| = c_ic_jw_iw_j|\ip{\x_i}{\x_j}|.\]
Thus, 
\begin{equation}\label{eq:difference}
    \sigma(\widetilde{U}, \widetilde{w})-\sigma(U, \w)  
    = 2\sum_{i\in C} (c_i-1)w_i \sum_{j=1}^n w_j|\ip{\x_i}{\x_j}| + \sum_{i,j\in C}(c_i-1)(c_j-1)w_iw_j|\ip{\x_i}{\x_j}|.
\end{equation}
Now, 
\begin{align}\label{eq:linear_term}
  2\sum_{i\in C} (c_i-1)w_i \sum_{j=1}^n w_j|\ip{\x_i}{\x_j}| 
  & = 2\sum_{i\in C} (c_i-1)\lambda_1(B)w_i^2\nonumber\\
  & = 2\lambda_1(B)\sum_{i\in C} (c_i-1)w_i^2\nonumber\\
  & = 2\lambda_1(B)\left(\|\widetilde{\w}\|_2^2 - \|\w\|_2^2\right)\nonumber\\
  & = 0.  
\end{align}
By condition $(i)$, we have $\ip{\x_i}{\x_j} = |\ip{\x_i}{\x_j}|$ whenever $i,j\in C$, and so
\begin{equation}\label{eq:quadratic_term}
       \sum_{i,j\in C}(c_i-1)(c_j-1)w_iw_j|\ip{\x_i}{\x_j}|  = \left\|\sum_{i\in C}(c_i-1)w_i\x_i\right\|_2^2\ge 0.
\end{equation}

Combining \eqref{eq:difference}, \eqref{eq:linear_term} and \eqref{eq:quadratic_term}, we have
\[   \sigma(\widetilde{U}, \widetilde{w}) \ge \sigma(U, \w) = \lambda(r,n).\]
By Theorem \ref{thm:relative_top_r_sum_characterization}, we have
\[ \sigma(\widetilde{U}, \widetilde{w})\le \lambda(r,n).\]
We conclude that 
\[  \sigma(\widetilde{U}, \widetilde{w}) = \lambda(r,n),\]
implying
$(\widetilde{U}, \widetilde{\w})$ is a maximizing pair. 
\end{proof}

In the last claim, we will repeatedly apply cloning (Claim \ref{claim:cloning}) over different orthants of $\mathbb{R}^r$ to reduce the order of $U$. To find the vector $\cvec$ needed to apply Claim \ref{claim:cloning}, we will use Carath\'eodory's Theorem given below.

\begin{theorem}[Carath\'eodory's Theorem {\cite[Corollary 7.1i]{Schrijver_1986}}]\label{theorem:Caratheodory}
Let $S \subseteq \mathbb{R}^d$. If $\x\in\cone(S)$, then there exist points
$\s_1,\ldots,\s_d \in S$ and coefficients $a_1,a_1,\ldots,a_d \geq 0$ such that
\[
\x = \sum_{i=1}^d a_i \s_i.
\]
\end{theorem}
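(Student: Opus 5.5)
The plan is the standard minimal-representation argument. Since $\x\in\cone(S)$, by definition there is a finite representation $\x=\sum_{i=1}^k a_i\s_i$ with $\s_i\in S$ and $a_i\ge 0$. Among all such representations I would choose one with $k$ as small as possible. Minimality lets us assume every $a_i>0$, because terms with zero coefficient can be dropped. Minimality also lets us assume the $\s_i$ are distinct, because repeated points can be merged by adding their coefficients. The goal is then to show $k\le d$.

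Suppose for contradiction that $k>d$. Then $\s_1,\ldots,\s_k$ are linearly dependent in $\mathbb{R}^d$, so there are reals $b_1,\ldots,b_k$, not all zero, with $\sum_{i=1}^k b_i\s_i=0$. After multiplying by $-1$ if necessary, at least one $b_i$ is strictly positive. Set
\[
t:=\min\{a_i/b_i : b_i>0\}>0 .
\]
Then
\[
\x=\sum_{i=1}^k (a_i-t b_i)\s_i .
\]
Each new coefficient $a_i-tb_i$ is non-negative. For $b_i\le 0$ this is immediate. For $b_i>0$ it follows from the choice of $t$. Moreover, the coefficient is zero for an index attaining the minimum. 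Deleting that term gives a representation of $\x$ with $k-1$ points of $S$, which contradicts the minimality of $k$. Hence $k\le d$.

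Finally, to obtain exactly $d$ points as in the statement, I would pad the representation with arbitrary points of $S$ carrying coefficient $0$. Repetitions are allowed in the statement, so this is harmless. One degenerate case needs a remark: if $S=\emptyset$, then $\cone(S)=\{0\}$, and we use the usual convention that the empty sum represents $0$.

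There is no real obstacle here. The only step requiring care is the choice of the step size $t$: it must simultaneously keep all coefficients non-negative and make at least one of them vanish. Choosing $t$ as the minimum ratio over indices with $b_i>0$, after arranging by a sign flip that such an index exists, handles both requirements at once.
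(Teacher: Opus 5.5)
Your proof is correct. The paper gives no proof of this statement: it quotes it from Schrijver (Corollary 7.1i) and uses it as a black box, so there is no internal argument to compare with. What you give is the standard self-contained elimination argument.

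In the cited source the result is obtained as a corollary of the fundamental theorem of linear inequalities (Theorem 7.1 there). That route also shows the points can be chosen linearly independent. Your argument gives this strengthening for free: the reduction step used only the linear dependence of $\s_1,\ldots,\s_k$, so a minimal representation automatically consists of linearly independent vectors. For the same reason, your assumption that the $\s_i$ are distinct is not actually needed.

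Your remark on $S=\emptyset$ is apt. Read literally, the statement cannot supply $d$ points from an empty set, so the right formulation is ``at most $d$ points''. That is all the paper uses when it concludes $|I_{\s}|\le\binom{r+1}{2}$.
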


\begin{claim}[Stability]\label{claim:stability} We have 
\[\lambda(r,n)\le \lambda(r, N_r).\]
\end{claim}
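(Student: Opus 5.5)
We may assume $n>N_r$; otherwise the inequality follows from the monotonicity of $\lambda(r,n)$ in $n$. Fix a maximizing pair $(U,\w)$ for $\lambda(r,n)$. Among all maximizing pairs, choose one whose number of nonzero rows $\x_i$ (equivalently, nonzero pairs $(w_i,\x_i)$) is minimal. The aim is to show that at most $N_r$ indices $i$ have $\x_i\neq 0$ and $w_i\neq 0$.

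Given this, the inequality follows by deleting the remaining indices. If $\x_i=0$, then deleting row $i$ keeps $U^\top U=I_r$. It also removes only the terms $w_iw_j|\ip{\x_i}{\x_j}|=0$ from $\sigma$, but it reduces $\|\w\|_2$. Renormalizing $\w$ can only increase $\sigma$, since $\sigma$ is homogeneous of degree $2$ in $\w$ and $\lambda(r,n)>0$. Similarly, if $w_i=0$ for some $i$, then deleting index $i$ leaves the terms of $\sigma$ unchanged but destroys $U^\top U=I_r$. To repair this, replace $U$ by $U(U'^\top U')^{-1/2}$, where $U'$ is $U$ with row $i$ removed.

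This repair needs a check. A cleaner alternative is to avoid it: choose a maximizing pair that minimizes the support, counting $i$ as present when $c_i>0$ in the cloning step. Cloning with $c_i=0$ sets both $\widetilde w_i=0$ and $\widetilde{\x}_i=0$ at once. A row that is zero in both $\w$ and $U$ can simply be deleted, and the result is a feasible pair in dimension $n-1$ with the same value of $\sigma$. Iterating, we get a feasible pair of size at most $N_r$ with $\sigma=\lambda(r,n)$. Theorem~\ref{thm:relative_top_r_sum_characterization} then gives $\lambda(r,n)\le\lambda(r,N_r)$. Zero-padding up to $N_r$ rows is harmless if fewer remain.

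\textbf{Main step: the orthant partition and Carath\'eodory.} Partition the indices with $\x_i\ne 0$ according to the orthant of $\mathbb{R}^r$ containing $\x_i$, using a sign vector in $\{\pm1\}^r$ and a fixed tie-breaking rule for zero coordinates. Identify $\pm\x_i$ with the same orthant class, which is allowed because $\x_i\x_i^\top$ is invariant under sign change and $|\ip{\x_i}{\x_j}|$ is too. One could replace $\x_i$ by $-\x_i$ throughout, since this flips the corresponding column and row of $I+S(G)$ and preserves maximality. Either way, this gives at most $2^r$ classes.

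Condition $(i)$ of Claim~\ref{claim:cloning} needs pairwise nonnegative inner products within a class $C$. The orthant condition does not give this directly: vectors in the same orthant need not have nonnegative inner products. The fix is to refine the classes instead. Inside a single orthant with all coordinates nonnegative, inner products of two vectors in that orthant are automatically $\ge 0$. So a class should be a set of vectors lying in a common closed orthant, and there are at most $2^r$ such orthants.

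For a class $C$, consider the matrix $\sum_{i\in C}\x_i\x_i^\top$. It lies in the cone generated by $\{\x_i\x_i^\top:i\in C\}$ inside the $\binom{r+1}{2}$-dimensional space of symmetric $r\times r$ matrices. By Theorem~\ref{theorem:Caratheodory}, we can write it as $\sum_{i\in C}c_i\x_i\x_i^\top$ with $c\ge 0$ supported on at most $\binom{r+1}{2}$ indices. Set $c_i=1$ outside $C$. Claim~\ref{claim:cloning} then yields a new maximizing pair in which all indices of $C$ outside the Carath\'eodory support become zero rows.

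Applying this successively to each of the at most $2^r$ orthant classes leaves at most $2^r\binom{r+1}{2}=N_r$ nonzero indices. The one thing to verify is that $\widetilde{\x}_i=\sqrt{c_i}\x_i$ stays in the same orthant, which it does because $c_i\ge 0$. So the partition is preserved, and Claim~\ref{claim:H_quadratic_form} and Claim~\ref{claim:spectral_radius_B} still hold for the new maximizing pair, as they apply to any maximizing pair. I expect the main obstacle to be exactly this bookkeeping: showing that the sequential cloning is legitimate because each step produces a genuine maximizing pair, and checking condition $(i)$ via the closed-orthant assignment.
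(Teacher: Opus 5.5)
Your proposal is correct and follows the paper's proof. You partition the nonzero rows into closed-orthant classes so that condition $(i)$ holds, use conic Carath\'eodory in the $\binom{r+1}{2}$-dimensional space of symmetric matrices to build $\cvec$, clone class by class (each clone is again a maximizing pair), and delete the rows that cloning zeroes out in both $U$ and $\w$. One correction: your aside that vectors in the same orthant need not have nonnegative inner products is false, since matching sign patterns give $x_ky_k\ge 0$ in every coordinate; this is harmless, because the classes you finally use are exactly the paper's orthant classes, and you were right to drop the $U(U'^\top U')^{-1/2}$ repair.
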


\begin{proof} For $x\in \mathbb{R}$, define
\[ 
\sgn(x):=
\begin{cases}
   1 & \text{ if }x\ge 0;\\
   -1 & \text{ if }x<0.
\end{cases}
\]
For a vector $\x\in \mathbb{R}^r$, define 
\[\sgn(\x) := (\sgn(x_1), \ldots, \sgn(x_r))\in \{\pm 1\}^r.\]
For a vector $\s\in \{\pm 1 \}^r$, define 
\[O_{\s}:=\{\x\in \mathbb{R}^r: \sgn(\x) = \s\}.\] 
The set $O_{\s}$ is called the \emph{orthant} associated with $\s$. Clearly, the set of orthants $\{O_{\s}:\s \in \{\pm 1\}^r\}$ is a partition of $\mathbb{R}^r$.

Consider the vectors $\x_i\in \mathbb{R}^r$ $(i\in [n])$ defined previously. Fix an $\s\in \{\pm 1\}^r$, and let 
\[C_{\s} := \{i: \x_i\in O_{\s}\}\subseteq [n].\] 
Suppose that $C_{\s}\neq \emptyset$. We will first argue that $C_{\s}$ satisfies the conditions of Claim \ref{claim:cloning}. 

Observe that if $\x_i, \x_j\in O_{\s}$, then clearly $\ip{\x_i}{\x_j}\ge 0$; so condition $(i)$ holds. Now, the space $\mathbb{S}^{r\times r}$ of all real symmetric matrices of order $r$ has dimension ${r+1 \choose 2}$. The matrix 
\[ \frac{1}{|C_{\s}|}\sum_{i\in C_{\s}}\x_i\x_i^\top \]
lies in the convex hull of the set $\{\x_i\x_i^\top: i\in C_{\s}\}\subseteq \mathbb{S}^{r\times r}$. Thus, by Carath\'eodory's Theorem \ref{theorem:Caratheodory}, there exists a subset $I_{\s}\subseteq C_{\s}$ with $|I_{\s}|\le {r+1\choose 2}$ and numbers $a_i\ge 0$ $(i\in I_{\s})$ such that 
\begin{equation}\label{eq:cara_1}
\frac{1}{|C_{\s}|}\sum_{i\in C_{\s}}\x_i\x_i^\top  = \sum_{i\in I_{\s}}a_i\x_i\x_i^\top. 
\end{equation}

Consider the vector $\cvec\in \mathbb{R}^n$ given by 
\[ c_i = 
\begin{cases}
    a_i|C_{\s}|  & \text{ if }i\in I_{\s};\\
    0 & \text{ if }i\in C_{\s}\backslash I_{\s};\\
    1 & \text{ if }i\notin C_{\s}.
\end{cases}\]
Then, using \eqref{eq:cara_1},  
\[\sum_{i\in C_{\s}}c_i\x_i\x_i^\top = |C_{\s}|\sum_{i\in I_{\s}}a_i\x_i\x_i^\top = \sum_{i\in C_{\s}}\x_i\x_i^\top.\]
Thus, $C_{\s}$ and $\cvec$ satisfy the conditions of Claim \ref{claim:cloning}, and therefore one can find a maximizing pair $(\widetilde{U}, \widetilde{\w})$ for $\lambda(r,n)$ such that 
\[|J_{\widetilde{U}}\cap C_{\s}|\le |I_{\s}|\le {r+1\choose 2},\]
where $J_{\widetilde{U}}:=\{i: i\text{-th row of }\widetilde{U} \text{ is non-zero}\}$. 

Since $(\widetilde{U}, \widetilde{\w})$ is a maximizing pair, so all the previous (analogous) claims hold. Applying Claim \ref{claim:cloning} repeatedly for all $\s\in \{\pm 1\}^{r}$, we can find a maximizing pair $(\widetilde{U}, \widetilde{w})$ such that 
\begin{equation}\label{eq:w_tilde_estimate}
 |J_{\widetilde{U}}|\le 2^{r}{r+1\choose 2}= N_r. 
\end{equation}

Observe that $\widetilde w_i=0$ whenever $\widetilde\x_i=0$ because of the definitions of $\widetilde{w}_i$ and $\widetilde{\x}_i$. Deleting the rows of $\widetilde{U}$ corresponding to indices outside $J_{\widetilde{U}}$ gives a \(k\times r\) submatrix $M$ with $k := |J_{\widetilde{U}}|\le N_r$ that satisfies $M^\top M = I_r$, and hence $k\ge r$. Thus, we have shown that 
\begin{align*}
  \lambda(r,n)& \le \max\left\{\sum_{i,j=1}^{k} w_iw_j|\ip{\x_i}{\x_j}|: \, \w \in \mathbb{R}_{\ge 0}^{k},\, \|\w\|_2= 1,\, M\in \mathbb{R}^{k\times r},\, M^\top M = I_r\right\} \\
  & = \lambda(r, k)\\
  & \le \lambda(r, N_r),  
\end{align*}
and the claim holds.
\end{proof}

Taking the supremum over \(n\ge r\) in Claim~\ref{claim:stability}, we obtain
\[
\lambda(r)
=
\sup_{n\ge r}\lambda(r,n)
\le
\lambda(r,N_r)
\le
\lambda(r).
\]
Therefore,
\[
\lambda(r,N_r)=\lambda(r).
\]
Since \(\lambda(r,n)\) is nondecreasing in \(n\), it follows that
\[
\lambda(r,n)=\lambda(r)
\qquad\text{for every } n\ge N_r.
\]
This completes the proof of Theorem~\ref{thm:stability_rel_proj_constant}.

\section*{Acknowledgements}

Bojan Mohar is supported in part by the NSERC Discovery Grant R832714 (Canada), by the ERC Synergy grant (European Union, ERC, KARST, project number 101071836), and by the Research Project N1-0218 of ARIS (Slovenia). Seyed Ahmad Mojallal is partially supported by the ERC Synergy grant (European Union, ERC, KARST, project number 101071836). 

\section*{Declaration of AI use}

The authors acknowledge the use of ChatGPT (GPT-5.5, OpenAI; accessed May 2026) solely for preliminary brainstorming and the exploration of possible proof strategies. AI tools were not used to draft the manuscript. All formal statements, arguments, and proofs in the manuscript were written and checked by the authors, who take full responsibility for the accuracy and integrity of the article.

\bibliographystyle{plain}
\bibliography{finiteness_ref}

\vspace{0.4cm}
\affl{Hitesh Kumar}{hitesh.kumar.math@gmail.com, hitesh\_kumar@sfu.ca}{Department of Mathematics, Simon Fraser University, Burnaby, Canada}

\affl{Bojan Mohar}{mohar@sfu.ca}{Department of Mathematics, Simon Fraser University, Burnaby, Canada\\On leave from FMF, Department of Mathematics, University of Ljubljana.}

\affl{Seyed Ahmad Mojallal}{seyed\_ahmad\_mojallal@sfu.ca}{Department of Mathematics, Simon Fraser University, Burnaby, BC, Canada}

\affl{Shivaramakrishna Pragada}{shivaramkratos@gmail.com, shivaramakrishna\_pragada@sfu.ca}{Department of Mathematics, Simon Fraser University, Burnaby, Canada}

\end{document}